\def\l{\lambda}
\def\R{{\mathbb R}}
\def\A{\mathcal{A}}
\newtheorem{theorem}{Theorem}[section]
\newtheorem{lemma}[theorem]{Lemma}
\newtheorem{definition}[theorem]{Definition}
\newtheorem{remark}{\bf Remark}[section]
\newfont{\Bb}{msbm10 scaled\magstep{1}}
\begin{document}

\title[Cracked beams and  arches]{Variational setting for cracked beams and shallow arches}

\author{Semion Gutman, Junhong Ha and Sudeok Shon}
\address{$^{1}$ Department of Mathematics, University of Oklahoma, Norman, Oklahoma 73019, USA, e-mail: sgutman@ou.edu}
\address{$^{1}$School of Liberal Arts, Korea University of Technology and Education,
        Cheonan 31253, South Korea, e-mail: hjh@koreatech.ac.kr}
\address{$^{3}$Department of Architectural Engineering, Korea University of Technology and Education,
        Cheonan 31253, South Korea, e-mail: sdshon@koreatech.ac.kr}

\subjclass[2010]{ 47J35, 35Q74, 35D30, 70G75}

\keywords{Shallow arch, beam, cracks,  eigenvalues and eigenfunctions}

\begin{abstract} 
We develop a rigorous mathematical framework for the weak formulation of cracked beams and shallow arches problems. First, we discuss the crack modeling by means of massless rotational springs. Then we introduce Hilbert spaces, which are sufficiently wide to accommodate such representations. Our main result is the introduction of a specially designed linear operator that "absorbs" the boundary conditions at the cracks.

 We also provide mathematical justification and derivation of the Modified Shifrin's method for an efficient computation of the eigenvalues and the eigenfunctions for cracked beams.  
\end{abstract}

\maketitle


\section{Introduction}\label{section:intro}
Detection of cracks is an important engineering problem. It requires a rigorous mathematical framework  for modeling the motion of cracked beams and shallow arches. 
The main goal of this paper is to develop a variational setting for such a framework.
 We also present an elegant and efficient method (a modification of the Shifrin's method) for the computation of the eigenvalues and the eigenfunctions for the cracked elements. 

For a theory of cracked Bernoulli-Euler beams see  \cite{Christides1984}.  
A significant effort has been directed at the vibration analysis of cracked beams. Representation of a crack by a rotational spring has been proven to be accurate, and it is often used, see  \cite{CADDEMI2009, CADDEMI2013944} and the extensive bibliography there. Determination of the beam natural frequencies is discussed in  \cite{LIN2002987, OSTACHOWICZ1991191, SHIFRIN1999}. 
S. Caddemi and his colleagues have further developed the theory using energy functions in \cite{CADDEMI2013944}. However, no full variational setting has been presented so far, making it difficult to study evolution problems. As we have already mentioned, our work closes a gap in this development.

The theory of uniform beams and shallow arches is well developed. An early exposition can be found in  \cite{Ball1(1973)}. More general models in the multidimensional setting, and a literature survey are presented in \cite{Emmrich_2011}. A review for vibrating beams is given in \cite{HAN1999}. Motion of uniform arches and a related parameter estimation problem are studied in \cite{GUTMAN2013297}. These results are extended to point loads in \cite{GH2017}. The existence of a compact, uniform attractor is established in \cite{GUTMAN2018557}. 

 The transverse motion of a beam or an  arch is described by the function $y(x,t),\, x\in [0,\pi],\, t\geq 0$, which represents the deformation of the beam/arch measured from the $x$-axis. For definiteness, the boundary conditions are of the hinged type
 \begin{equation}\label{intro:eq32}
y(0,t)=y''(0,t)=0,\quad y(\pi,t)=y''(\pi,t)=0,\quad t \in (0,T).
\end{equation}
 Other types of boundary conditions, can be treated similarly.
 
 Crack modeling is considered in Section \ref{section:crack}.
Suppose that there are $m$ cracks located at $0<x_1<x_2<\dots<x_m<\pi$.
A crack at $x=x_i$ is represented by a rotational spring with the flexibility $\theta_i$, $i=1,\dots,m$. This is expressed as 
\begin{equation}\label{intro:eq34}
y'(x_i^+,t)-y'(x_i^-,t)=\theta_i y''(x_i,t),\quad t>0,\ i=1,\dots, m.
\end{equation}
In Section \ref{section:hilbert} we introduce special Hilbert spaces $V$ and $H$ satisfying
\begin{equation}\label{intro:eq36}
V\subset  H\subset V',
\end{equation}
with continuous and dense embeddings. These spaces are broad enough to contain continuous functions with discontinuous derivatives at the joint points.

Section \ref{section:var} contains our main result.
We introduce the operator $\A : V\to V'$, by
\begin{equation}\label{intro:eq38}
\langle \A u,v\rangle_V= \sum_{i=1}^{m+1} (u'', v'')_i+\sum_{i=1}^m \frac 1\theta_i J[u'](x_i) J[v'](x_i), 
\end{equation}
for any $u,v\in V$, where $J[u'](x)=u'(x^+)-u'(x^-)$.

Then we show that the solution $u$ of the equation $\A u=f$ in $H$ satisfies the joint conditions, including \eqref{intro:eq34}. Thus the operator $\A$ "absorbs" the boundary conditions, as expected of the weak  formulation of the steady state problem.

This result allows us to prove the existence of the eigenvalues and the eigenfunctions of $\A$.
An efficient Modified Shifrin's Method for their computation is presented in Section \ref{section:eig}.

The results in this paper form the basis for a comprehensive study of dynamic behavior of cracked beams and arches. It will be presented elsewhere.

\section{Crack modeling}\label{section:crack}
\setcounter{equation}{0}
  A crack is a  disruption in the material, that has a negligible extent in the direction of the beam/arch axis, but of a non-negligible depth. It is fully described by its position along the axis, and the crack depth ratio $\hat\mu$, as shown in Figure \ref{figCracks}. 
\begin{figure}
\begin{center}
\includegraphics[height=3.5cm,width=11.5cm]{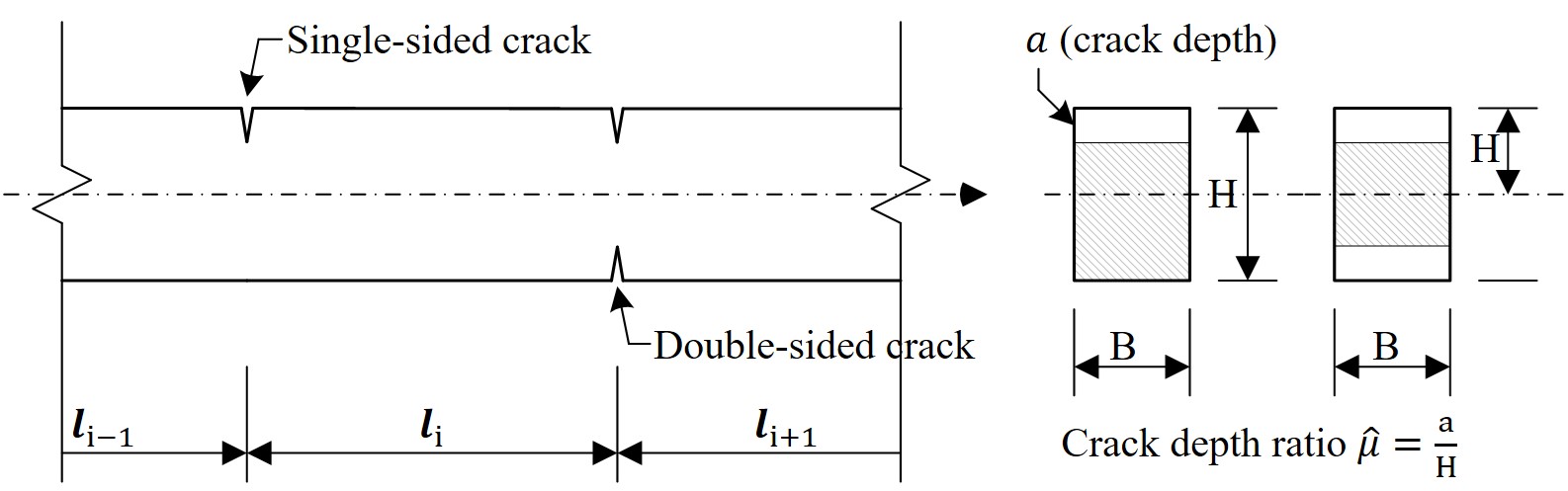}
\end{center}
\vspace{-0.5cm}
\caption{Crack parameters.} \label{figCracks}
\end{figure}

According to \cite{CERRI200439}, a crack is modeled by a massless rotational spring.
The  spring flexibility $\theta=\theta(\hat\mu)$ depends on the crack depth ratio $\hat\mu$, and on whether the crack is one-sided or two-sided, open or closed, and so on. The flexibility $\theta$ is equal to $0$ if there is no crack, and it increases with the crack depth.
Explicit expressions for the functions $\theta(\hat\mu)$ are provided in Section \ref{section:eig}.  

\begin{remark}
The following discussion is applicable to both arches and beams, but to avoid repetitions we will refer just to arches. 
\end{remark}

Suppose that there are $m$ cracks along the length of the arch, located at $0<x_1<\dots <x_m<\pi$.  For convenience, we denote $x_0=0$, and  $x_{m+1}=\pi$. 
Consequently, the cracked arch is modeled as a collection of $m+1$ uniform arches over the intervals $l_i=(x_{i-1},x_i),\, i=1,\dots, m+1$, as shown in Figure \ref{figCrArch}(b).

We consider only the transverse motion of the arch, so its position can be described by the function $y=y(x,t)$, $0\leq x\leq \pi$, $t\geq 0$.
The boundary conditions at the cracks enforce the continuity of the displacement field $y$, the bending moment $y''$, and the shear force $y'''$. 
Condition $y'(x_i^+,t)-y'(x_i^-,t)=\theta_i y''(x_i^+,t)$ expresses the discontinuity of the arch slope at the $i$-th crack, where $\theta_i=\theta(\hat\mu_i)$, see Figure \ref{figCrArch}(b). 

\begin{figure}[b]
\begin{center}
\includegraphics[height=2.5cm,width=11.5cm]{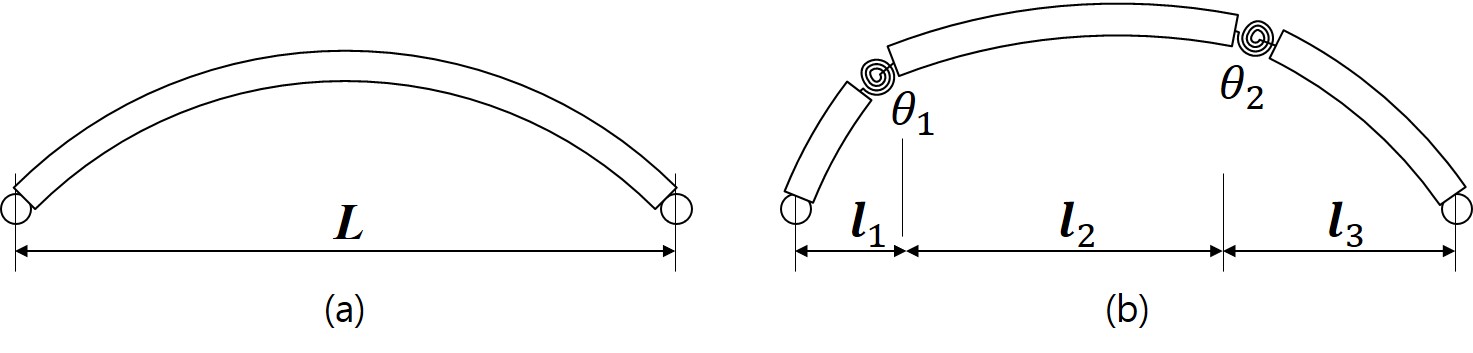}
\end{center}
\vspace{-0.5cm}
\caption{Beam or shallow arch: (a) uniform, (b) with two cracks.} \label{figCrArch}
\end{figure}

To simplify the statement of the boundary conditions at the cracks, we introduce the notion of the \emph{jump} $J[u](x)$ of a function $u=u(x)$ at any $x\in(0,\pi)$, as follows
\begin{equation}\label{intro:eq16}
 J[u](x)=u(x^+)-u(x^-).
\end{equation}

With this notation the conditions at the cracks (joint conditions) are
\begin{equation}\label{intro:eq18}
 J[y](x_i,t)=0,\quad J[y''](x_i,t)=0,\quad J[y'''](x_i,t)=0,
\end{equation}
and
\begin{equation}\label{intro:eq20}
 J[y'](x_i,t)=\theta_i y''(x_i^+,t),
\end{equation}
where $\theta_i=\theta(\hat\mu_i)$, $i=1,2,\dots,m$ and $t\geq 0$. Note that $y''(x_i^+,t)=y''(x_i^-,t)$ by \eqref{intro:eq18}.

\section{Hilbert spaces}\label{section:hilbert}
\setcounter{equation}{0}
We introduce Hilbert spaces $H$ and $V$ suitable for working with cracked elements.
Suppose that an arch has $m$ cracks at the joint points $0<x_1<\dots <x_m<\pi$.
 This partition of the interval $[0,\pi]$ is associated with $m+1$ subintervals $l_i=(x_{i-1},x_i),\, m=1,\dots, m+1$. 
 
Let $H$ be the Hilbert space
\begin{equation}\label{hilbert:eq10}
H=\bigoplus_{i=1}^{m+1} L^2(l_i).
\end{equation}

Let the inner product and the norm in $L^2(l_i)$ be denoted by $(\cdot,\cdot)_i$ and $|\cdot|_i$ correspondingly. 
The inner product and the norm in $H$ are defined by
\begin{equation}\label{hilbert:eq12}
(u,v)_H=\sum_{i=1}^{m+1}(u,v)_i, \quad |u|^2_H= \sum_{i=1}^{m+1}|u|^2_i.
\end{equation}

Consider the Sobolev space $H^2(a,b)$ on a bounded interval $(a,b)\subset \R$, and let
$u\in H^2(a,b)$. Then $u, u'$ are continuous functions on $[a,b]$, up to a set of measure zero, and $u''\in L^2(a,b)$. Therefore, for such $u$, we will always assume that $u, u'\in C[a,b]$.

Define the linear space
\begin{equation}\label{hilbert:eq22}
V=\left\{ u\in \bigoplus_{i=1}^{m+1} H^2(l_i)\, :\, u(0)=u(\pi)=0,\ J[u](x_i)=0,\ i=1,\dots, m\right\}.
\end{equation}
We interpret  $u\in V$ as a continuous function on $[0,\pi]$, such that $u(0)=u(\pi)=0$, with $u'\in L^2(0,\pi)$, i.e. $u\in H_0^1(0,\pi)$. Furthermore,  $u|_{l_i}\in H^2(l_i)$, and $u'|_{l_i}\in C[x_{i-1}, x_i]$ for  $i=1,2,\dots, m+1$. 

 Define the inner product on $V$  by
\begin{equation}\label{hilbert:eq24}
((u,v))_V=\sum_{i=1}^{m+1} (u'', v'')_i+\sum_{i=1}^m J[u'](x_i) J[v'](x_i), \quad\text{for any}\ u,v\in V,
\end{equation}
where $(u'',v'')_i=\int_{l_i} u''(x) v''(x)\, dx$.

It is clear that $((\cdot,\cdot))_V$ is a symmetric, bilinear form on $V$. To see that $((u,u))_V=0$ implies  $u=0$, notice that any function $u$ with $((u,u))_V=0$ is piecewise linear and continuous on $[0,\pi]$. Furthermore, $J[u'](x_i)=0$ for any $i=1,2,\dots,m$. Therefore $u'$ is continuous on $[0,\pi]$. In fact, it is a constant there, since $u''=0$ a.e. on $[0,\pi]$. Thus $u$ is a linear function on $[0,\pi]$ satisfying the zero boundary conditions at the ends of the interval. Therefore $u=0$ on $[0,\pi]$, and $((\cdot,\cdot))_V$ is a well-defined inner product on $V$. The corresponding norm in $V$ is
\begin{equation}\label{hilbert:eq26}
\|u\|_V^2=\sum_{i=1}^{m+1} |u''|^2_i+\sum_{i=1}^m |J[u'](x_i)|^2, \quad\text{for any}\ u\in V,
\end{equation}
where $|\cdot|_i$ is the norm in $L^2(l_i)$. We will show in Lemma \ref{hilbert:lemma16} that $V$ is a Hilbert space. 

Let $u\in V$. We define the derivatives of $u$ component-wise in the spaces $H^2(l_i)$, that is $u'(x)=(u|_{l_i})'(x), u''(x)=(u|_{l_i})''(x)$, and so on, for $x\in l_i$, $i=1,\dots,m+1$. For definiteness, we will assume that the derivative $u'$ is continuous from the right on $[0,\pi]$.

Some useful properties of functions in $V$ are established in the following lemma.
\begin{lemma}\label{hilbert:lemma12}
Let $c\geq 0$ denote various constants independent of $u\in V$. Then
\begin{enumerate}[(i)]
\item The second derivative $u''$ is bounded in $H$, and
\begin{equation}\label{hilbert:eq40}
|u''|_H\leq \|u\|_V.
\end{equation}
\item The derivative $u'$ is bounded on $[0,\pi]$,
\begin{equation}\label{hilbert:eq42}
\sup \{|u'(x)|,\ x\in [0,\pi]\}\leq c\left(|u'|_H+  |u''|_H \right).   
\end{equation}
Moreover,
\begin{equation}\label{hilbert:eq43}
|u'|_H\leq c\|u\|_V,\quad\text{and}\quad  \sup\{|u'(x)|,\ x\in [0,\pi]\}\leq \|u\|_V.
\end{equation}
\item Function $u$ is Lipschitz continuous, with the Lipschitz constant $c\|u\|_V$. Also, $u$ is bounded on $[0,\pi]$, 
\begin{equation}\label{hilbert:eq44}
\|u\|_\infty=\max \{|u(x)|,\ x\in [0,\pi]\}\leq c\|u\|_V,
\end{equation}
and
\begin{equation}\label{hilbert:eq47}
|u|_H\leq c\|u\|_V.
\end{equation}

\end{enumerate}
\end{lemma}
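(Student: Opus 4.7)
Part (i) is immediate: by definition of $\|u\|_V^2$, the sum $\sum_{i=1}^{m+1}|u''|_i^2$ is just $|u''|_H^2$, and this is bounded above by $\|u\|_V^2$.

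For the pointwise bound \eqref{hilbert:eq42}, I would work on each subinterval $l_i$ where $u|_{l_i}\in H^2(l_i)$, so $u'|_{l_i}\in C[x_{i-1},x_i]$. For $x,y\in l_i$ the fundamental theorem gives $u'(x)=u'(y)+\int_y^x u''(s)\,ds$, and Cauchy--Schwarz yields $|u'(x)|^2\le 2|u'(y)|^2+2|l_i||u''|_i^2$. Averaging in $y\in l_i$ and summing over $i$ produces \eqref{hilbert:eq42}. The harder inequality $|u'|_H\le c\|u\|_V$ is the heart of the lemma, because $\|u\|_V$ only controls $u''$ and the jumps of $u'$, not $u'$ itself. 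My plan is to absorb the jumps into a step function. Define $v:[0,\pi]\to\R$ piecewise constant with $v\equiv 0$ on $l_1$ and $v\equiv \sum_{k\le i-1}J[u'](x_k)$ on $l_i$, so that $w:=u'-v$ is continuous on $[0,\pi]$ with $w'=u''$ a.e. Clearly $|v|_H\le c\sum_i|J[u'](x_i)|\le c\sqrt{m}\,\|u\|_V$, so it suffices to bound $|w|_H$. Since $u\in H^1_0(0,\pi)$ and $u$ is continuous, $\int_0^\pi u'\,dx=u(\pi)-u(0)=0$, which forces $\int_0^\pi w\,dx=-\int_0^\pi v\,dx$. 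Writing $w(x)=w(0)+\int_0^x u''(s)\,ds$ and integrating, I can solve for $w(0)$ in terms of $u''$ and $v$, obtaining $|w(0)|\le c(|u''|_H+\sum_i|J[u'](x_i)|)\le c\|u\|_V$. Then $\|w\|_\infty\le |w(0)|+\sqrt{\pi}\,|u''|_H\le c\|u\|_V$, and hence $|w|_H\le\sqrt{\pi}\|w\|_\infty\le c\|u\|_V$. Combining, $|u'|_H\le c\|u\|_V$. Feeding this back into \eqref{hilbert:eq42} together with (i) gives the uniform bound on $|u'|$ claimed in \eqref{hilbert:eq43}.

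For part (iii), the Lipschitz bound follows from (ii): since $u$ is continuous across each $x_i$ and $u'$ is bounded, for any $x<y$ in $[0,\pi]$ we have $|u(y)-u(x)|\le\int_x^y|u'(s)|\,ds\le\|u'\|_\infty\,|y-x|\le c\|u\|_V\,|y-x|$ (the integral is taken piecewise, but the continuity of $u$ at each crack point glues the pieces together). Using $u(0)=0$ gives $\|u\|_\infty\le c\pi\|u\|_V$, and then $|u|_H\le\sqrt{\pi}\,\|u\|_\infty\le c\|u\|_V$.

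The main obstacle is the bound $|u'|_H\le c\|u\|_V$; the rest is bookkeeping. The key idea is that subtracting the jump step function $v$ turns $u'$ into a genuine $H^1$ function whose initial value is determined, up to controlled error, by the zero Dirichlet condition on $u$.
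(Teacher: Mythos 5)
Your proof is correct, and for the key estimate $|u'|_H\le c\|u\|_V$ it takes a genuinely different route from the paper. The paper proves \eqref{hilbert:eq42} essentially as you do (mean value of $u'$ on each $l_i$ plus the fundamental theorem), but then handles the hard part by bounding the total oscillation of $u'$: integrating $u''$ from $a$ to $b$ and collecting the jump terms gives $|u'(b)-u'(a)|\le\int_0^\pi|u''|+\sum_i|J[u'](x_i)|\le c\|u\|_V$, and since $u(0)=u(\pi)=0$ forces $u'$ to take both a nonpositive and a nonnegative value, this oscillation bound immediately yields $\sup|u'|\le c\|u\|_V$ and hence $|u'|_H\le c\|u\|_V$. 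You instead subtract the step function $v$ built from the jumps, so that $w=u'-v$ lies in $H^1(0,\pi)$ with $w'=u''$, and then use the zero-mean condition $\int_0^\pi u'=0$ to pin down the constant of integration $w(0)$ up to an error controlled by $|u''|_H$ and the jumps. Both arguments ultimately rest on the same two ingredients --- control of the variation of $u'$ by $|u''|_H$ plus the jump terms, and the cancellation $\int_0^\pi u'=u(\pi)-u(0)=0$ --- so neither is stronger than the other; the paper's version is slightly more economical, while your decomposition into a continuous part plus a piecewise-constant jump part foreshadows the splitting $u=u_s+u_l$ that the paper later exploits in Lemma \ref{eig:lemma2} for the Modified Shifrin method. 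Your part (iii), which the paper omits, is correct; note only that the constant-free bound $\sup|u'(x)|\le\|u\|_V$ asserted in \eqref{hilbert:eq43} is obtained by neither argument (both give $c\|u\|_V$), which appears to be a slip in the statement rather than in your proof.
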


\begin{proof} We only show {\it (ii)}.
Let $u\in V$. Then
its derivative $u'$ is continuous on any interval $[x_{i-1},x_i]$, $i=1,\dots, m+1$. By the Mean Value Theorem for Integrals, there exists $c_i\in [x_{i-1},x_i]$, such that
\[
u'(c_i)=\frac 1 {|l_i|}\int_{l_i} u'(s)\, ds.
\]
Thus $|u'(c_i)|\leq c|u'|_H$. Also, for any $x\in [x_{i-1},x_i]$,
\[
|u'(x)-u'(c_i)|\leq\int_{l_i} |u''(s)|\, ds\leq c|u''|_H.
\]
Therefore 
$|u'(x)|\leq c\left(|u'|_H+  |u''|_H \right)$
for any $x\in[0,\pi]$, giving \eqref{hilbert:eq42}. This inequality implies
 $|J[u'](x)|\leq 2c\left(|u'|_H+  |u''|_H \right)$.

We have
\begin{align}\label{hilbert:eq50}
 \int_a^{b} u''(x)\, dx&=u'|_a^b+\sum_{a< x_i\leq b}\left(u'(x_i^-)-u'(x_i^+)\right)\\
 &= u'(b)-u'(a)-\sum_{a< x_i\leq b} J[u'](x_i).\nonumber
\end{align}
Therefore
\begin{equation}\label{hilbert:eq52}
|u'(b)-u'(a)|\leq \int_0^L|u''(x)|\, dx+\sum_{i=1}^m |J[u'](x_i) |\leq c\|u\|_V.
\end{equation}

First, choose $a\in[0,\pi]$ be such that $u'(a)\leq 0$, which is always possible, since $u(0)=u(L)=0$. Then, by  \eqref{hilbert:eq52}, for any $b\in [0,\pi]$ we have  $u'(b)\leq c\|u\|_V$. This establishes the upper bound for $u'(x),\, x\in [0,\pi]$. Similarly, choosing $a$ such that $u'(a)\geq 0$, we establish the lower bound for $u'(x),\, x\in [0,\pi]$. Inequalities in \eqref{hilbert:eq43} follow.
\end{proof}

\begin{lemma}\label{hilbert:lemma16}
Let $u\in V$, 
\[
N^2_1(u)=|u|^2_H+|u'|^2_H+|u''|^2_H,
\]
and
\[
N^2_2(u)=|u'|^2_H+|u''|^2_H.
\]
Then 
\begin{enumerate}[(i)]
\item The norms $N_1, N_2$ and $\|\cdot\|_V$ are equivalent on $V$.
\item $\bigoplus_{i=1}^{m+1} H^2(l_i)$ is a Hilbert space, and $V$ is its closed subspace of \newline co-dimension $m+2$. 
\item $V$ is a Hilbert space.
\end{enumerate}
\end{lemma}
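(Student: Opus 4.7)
The plan is to tackle the three parts in order, leveraging Lemma \ref{hilbert:lemma12} heavily for part (i), then obtaining (iii) as a corollary of (i) and (ii).

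For (i), I would first observe the trivial bound $N_2 \leq N_1$, and that $|u''|_H \leq \|u\|_V$ comes directly from \eqref{hilbert:eq40}. Combined with \eqref{hilbert:eq43} and \eqref{hilbert:eq47} of Lemma \ref{hilbert:lemma12}, this yields $N_1(u) \leq c\|u\|_V$. The reverse direction is the substantive step: I need to bound the jump terms in $\|u\|_V$ by $N_2$. Using $|J[u'](x_i)| \leq 2\sup_{[0,\pi]} |u'(x)|$ together with \eqref{hilbert:eq42} gives $|J[u'](x_i)| \leq c(|u'|_H + |u''|_H) \leq c\, N_2(u)$, so summing over the $m$ cracks and adding $|u''|_H^2$ produces $\|u\|_V^2 \leq c\, N_2(u)^2 \leq c\, N_1(u)^2$. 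This closes the chain of equivalences.

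For (ii), the direct sum $W := \bigoplus_{i=1}^{m+1} H^2(l_i)$ with the natural inner product is Hilbert because each factor is. To show $V$ is closed in $W$, I would rewrite its defining relations as kernels of linear functionals: $\ell_0(u) = u(0)$, $\ell_{m+1}(u) = u(\pi)$, and $\ell_i(u) = u(x_i^+) - u(x_i^-)$ for $i=1,\dots,m$. Each of these is a finite linear combination of point evaluations on intervals $l_j$, and since $H^2(l_j) \hookrightarrow C(\overline{l_j})$ continuously, every point evaluation is a bounded functional on $W$. Therefore $V = \bigcap_{i=0}^{m+1} \ker \ell_i$ is a closed subspace. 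For the co-dimension claim I need the $m+2$ functionals to be linearly independent in $W^*$, which I would verify by exhibiting, for each $i$, a test function in $W$ supported in a neighborhood of the relevant endpoint or jump that is annihilated by all other $\ell_j$ — routine piecewise polynomial constructions suffice.

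For (iii), since $W$ is a Hilbert space and $V$ is a closed subspace of it, $V$ is complete under the restriction of the $W$-norm, which is precisely $N_1$. By part (i), $\|\cdot\|_V$ is equivalent to $N_1$ on $V$, so $V$ is also complete with respect to $\|\cdot\|_V$. Since $((\cdot,\cdot))_V$ was already shown to be an inner product before the statement of the lemma, completeness upgrades $V$ to a Hilbert space.

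I expect the main obstacle to be the co-dimension assertion in (ii): the norm-equivalence in (i) is essentially a repackaging of Lemma \ref{hilbert:lemma12}, and closedness in (ii) is immediate from Sobolev embedding, but establishing that the $m+2$ constraints are genuinely independent (rather than just cutting out a closed subspace of some unspecified codimension) requires an explicit construction of $m+2$ linearly independent "dual" functions in $W$. Once this is in place, (iii) is essentially automatic.
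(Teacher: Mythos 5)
Your proposal is correct and follows essentially the same route as the paper: norm equivalence is read off from Lemma \ref{hilbert:lemma12} together with the bound $|J[u'](x_i)|\leq 2c(|u'|_H+|u''|_H)$, closedness of $V$ in $\bigoplus_{i=1}^{m+1}H^2(l_i)$ comes from the continuity of the point-evaluation (trace) functionals, and completeness of $V$ is inherited as a closed subspace. The one place you go beyond the paper is in insisting that the $m+2$ functionals be shown linearly independent before claiming co-dimension $m+2$ --- the paper asserts this without comment --- and your proposed piecewise-polynomial dual functions are exactly the routine construction that settles it.
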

\begin{proof} 
{\it (i)} This follows from Lemma  \ref{hilbert:lemma12}, and the observation that $|J[u'](x)|\leq 2c\left(|u'|_H+  |u''|_H \right)$. 

{\it (ii)}
Let $X=\bigoplus_{i=1}^{m+1} H^2(l_i)$. Then $X$ is a Hilbert space with the norm $N_1(\cdot)$. By the Trace Theorem \cite[Theorem 3.2]{LionsMagenes}, functionals $g_0(u)=u(0^+),$ and $g_\pi(u)=u(\pi^-)$, $u\in X$ are continuous linear functional on $X$. Therefore $\{u\in X : g_0(u)=0,\ g_\pi(u)=0\}$ is a closed subspace of $X$ of co-dimension $2$ in $X$. 

Similarly, the functionals $J[u](x_i)=u(x_i^+)-u(x_i^-)$, $u\in X$, $i=1,\dots, m$, are linear and continuous on $X$. By the definition \eqref{hilbert:eq22} of $V$, we conclude that $(V, N_1(\cdot))$ is a closed subspace of $X$  of co-dimension $m+2$.

It remains to show that on $V$ the norm $N_1$ is equivalent to the norm $\|\cdot\|_V$, defined in \eqref{hilbert:eq26}, but this was established in {\it (i)}.

{\it (iii)}
 Since the space $X=\bigoplus_{i=1}^{m+1} H^2(l_i)$ is complete, then so is  its closed subspace $V$.
\end{proof}
%
%
 
 \begin{lemma}\label{hilbert:lemma2} 
The identity embedding $i : V\to H$ is linear, continuous, with a dense range in $H_0^1$. Furthermore, it is compact.
\end{lemma}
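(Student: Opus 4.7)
The plan is to establish the three claims in order, since each is reasonably self-contained given the prior lemmas.

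First, linearity of $i$ is immediate from the set-theoretic definition. For continuity, I would simply cite estimate \eqref{hilbert:eq47} of Lemma \ref{hilbert:lemma12}, which gives $|u|_H\leq c\|u\|_V$ for all $u\in V$; this is exactly the operator bound needed. So no new work is required for the first two properties.

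For density of the range in $H_0^1(0,\pi)$, the natural approach is to observe that $C_c^\infty(0,\pi)\subset V$. Indeed, any $\varphi\in C_c^\infty(0,\pi)$ vanishes at the endpoints, lies in $H^2(l_i)$ for every $i$, and has $J[\varphi](x_i)=0$ for all $i$ because it is smooth on the whole interval. Since $C_c^\infty(0,\pi)$ is dense in $H_0^1(0,\pi)$ in its standard norm, and since $V\hookrightarrow H_0^1(0,\pi)$ continuously by Lemma \ref{hilbert:lemma12} (via \eqref{hilbert:eq43} and \eqref{hilbert:eq47}), the range $i(V)=V$ is dense in $H_0^1(0,\pi)$.

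For compactness, I would reduce to the classical Rellich--Kondrachov theorem on each subinterval. By Lemma \ref{hilbert:lemma16}(i), any $V$-bounded sequence $\{u_n\}$ is also bounded in the norm $N_1$, hence $\{u_n|_{l_i}\}$ is a bounded sequence in $H^2(l_i)$ for each $i=1,\dots,m+1$. Since $l_i$ is a bounded interval, the embedding $H^2(l_i)\hookrightarrow L^2(l_i)$ is compact. Extracting a convergent subsequence on $l_1$, then passing to a further subsequence convergent on $l_2$, and so on through the finitely many subintervals, yields a single subsequence convergent in every $L^2(l_i)$, and therefore in $H=\bigoplus_{i=1}^{m+1}L^2(l_i)$ by \eqref{hilbert:eq12}. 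This gives compactness of $i$.

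I do not anticipate a real obstacle here: the only point that requires a moment of care is ensuring that $C_c^\infty(0,\pi)$ actually lies in $V$ (it does, trivially, because smoothness kills all the jump conditions), and that the finite-direct-sum structure of $H$ allows the Rellich extraction to be done coordinatewise in finitely many steps rather than requiring a diagonal argument.
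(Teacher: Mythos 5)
Your proposal is correct, and the linearity, continuity, and density parts coincide with the paper's argument (the paper likewise notes $C_0^\infty(0,\pi)\subset V$ and invokes density of $C_0^\infty$ in the target space; it actually proves density in $H$, which is what the Gelfand triple needs, whereas you prove density in $H_0^1$, which is what the lemma literally states --- both are legitimate and rest on the same inclusion). Where you genuinely diverge is compactness: the paper works with the unit ball of $V$ and applies an Arzel\`a--Ascoli argument, using Lemma \ref{hilbert:lemma12} to get equiboundedness and equi-Lipschitz continuity of $u$ on $[0,\pi]$ (and of $u'$ on each $[x_{i-1},x_i]$), hence precompactness in $C[0,\pi]$ and so in $L^2(0,\pi)$. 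You instead pass through the norm equivalence of Lemma \ref{hilbert:lemma16}(i) to get boundedness in each $H^2(l_i)$ and apply Rellich--Kondrachov componentwise, extracting convergent subsequences over the finitely many subintervals. Both arguments are sound; yours is arguably the more standard functional-analytic route and requires only the norm equivalence, while the paper's yields the slightly stronger byproduct of precompactness in $C[0,\pi]$ (uniform convergence), which is sometimes useful downstream. No gaps.
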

\begin{proof}
The embedding is linear. By \eqref{hilbert:eq47}, $|u|_H\leq c\|u\|_V$ for $u\in V$. Thus the embedding is continuous. 

Let $B\subset V$ be the unit ball of $V$. By Lemma \ref{hilbert:lemma12},
 functions $u\in B$ are equicontinuous and equibounded on $[0,\pi]$. Hence they form a precompact set in $C[0,\pi]$, and in $L^2(0,\pi)$. Similarly, functions $\{u'\}_{u\in B}$ are precompact in $C[x_{i-1},x_i]$, for any $i=1,\dots,m+1$, hence they are precompact in $L^2(0,\pi)$.  The compactness of the embedding follows.

For the density of the embedding, note that $C_0^\infty(0,\pi)\subset V\subset H$, and $C_0^\infty(0,\pi)$ is dense in $H$. Therefore $V$ is dense in $H$.
\end{proof}

Lemma \ref{hilbert:lemma2} allows us to define the Gelfand triple $V\subset H\subset V'$, with 
the dense embeddings. Furthermore, the embedding $ V\subset H$ is compact.
The pairing $\langle\cdot,\cdot\rangle_V$ between $V$ and $V'$ extending the inner product in $H$. This means that given $f\in H=H'\subset V'$, and $v\in V$, we have $\langle f,v\rangle_V=(f,v)_H$.

\setcounter{equation}{0}
\section{Variational setting and operator $\A$}\label{section:var}

We introduce the operator $\A : V\to V'$ that "absorbs" the junction boundary conditions.
This operator is central to the variational setting of problems for cracked beams and arches.
The existence of its eigenvalues and the eigenfunctions is established as well.

\begin{definition}\label{var:def2}
Define the  operator $\A$ on $V$ by
\begin{equation}\label{var:eq8}
\langle \A u,v\rangle_V= \sum_{i=1}^{m+1} (u'', v'')_i+\sum_{i=1}^m \frac 1\theta_i J[u'](x_i) J[v'](x_i), 
\end{equation}
for any $u,v\in V$. We will also write $\langle \A u,v\rangle$ for $\langle \A u,v\rangle_V$, if it does not cause a confusion.
\end{definition}
See Section \ref{section:crack} for the setup for the junction (crack) points $x_i$, and the flexibilities $\theta_i$. Recall, that a linear operator $A : V\to V'$ is called coercive, if there exists $c>0$, such that $\langle Au, u\rangle\geq c\|u\|^2_V$ for any $u\in V$.

\begin{lemma}\label{var:lemma1}
Let $\A$ be defined by \eqref{var:eq8}. Then $\A$ is a symmetric, continuous, linear, and coercive operator from $V$ onto $V'$.
\end{lemma}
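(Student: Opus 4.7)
The plan is to verify linearity, symmetry, continuity, and coercivity directly from the defining formula \eqref{var:eq8}, and then to deduce surjectivity via the Lax--Milgram theorem. Linearity of $u \mapsto \A u$ and symmetry of $\langle \A u, v\rangle$ are immediate, since every summand in \eqref{var:eq8} is bilinear and symmetric in $(u,v)$ and the crack flexibilities $\theta_i > 0$ are fixed constants.

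For continuity, I would apply Cauchy--Schwarz term-by-term. Writing $\theta_* = \min_i \theta_i > 0$, the bound $|(u'',v'')_i| \leq |u''|_i |v''|_i$ in $L^2(l_i)$, together with $\tfrac{1}{\theta_i} |J[u'](x_i) J[v'](x_i)| \leq \theta_*^{-1} |J[u'](x_i)| |J[v'](x_i)|$ for the jump pieces, followed by the Cauchy--Schwarz inequality for finite sums and the expression \eqref{hilbert:eq26} for $\|\cdot\|_V$, would yield $|\langle \A u, v\rangle| \leq \max(1, \theta_*^{-1}) \|u\|_V \|v\|_V$. This shows simultaneously that $\A u \in V'$ for each $u \in V$ and that $\|\A u\|_{V'} \leq \max(1, \theta_*^{-1}) \|u\|_V$.

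Coercivity is essentially built in: setting $v=u$ in \eqref{var:eq8} and writing $\theta^* = \max_i \theta_i$, the comparison with \eqref{hilbert:eq26} gives
\[
\langle \A u, u\rangle = \sum_{i=1}^{m+1} |u''|_i^2 + \sum_{i=1}^m \frac{1}{\theta_i} |J[u'](x_i)|^2 \geq \min(1, 1/\theta^*)\,\|u\|_V^2.
\]

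For surjectivity, the bilinear form $a(u,v) := \langle \A u, v\rangle$ is continuous and coercive on $V \times V$, so the Lax--Milgram theorem produces, for every $f \in V'$, a unique $u \in V$ satisfying $a(u,v) = \langle f, v\rangle_V$ for all $v \in V$, i.e.\ $\A u = f$; in particular $\A : V \to V'$ is onto (even bijective). The only mild obstacle is the careful bookkeeping of constants in the continuity estimate; all other points are routine, and surjectivity follows at once from coercivity via Lax--Milgram.
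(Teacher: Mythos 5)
Your proposal is correct and follows essentially the same route as the paper: verify symmetry, linearity, boundedness, and coercivity directly from \eqref{var:eq8} using the positivity of the $\theta_i$, then obtain surjectivity from coercivity via a Lax--Milgram type argument (the paper cites \cite[Theorem 2.2.1]{temam1997infinite} for this last step). Your version merely makes the constants $\min(1,1/\theta^*)$ and $\max(1,\theta_*^{-1})$ explicit.
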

\begin{proof}
Clearly, $\A$ is a symmetric linear operator. Since all $\theta_i>0$, we conclude that there exists a constant $C>0$, such that $|\langle \A u,v\rangle|\leq C\|u\|_V\|v\|_V$. Therefore $\A$ is defined on all of $V$, and it is bounded.

Similarly,
\[
|\langle \A u,u\rangle|=\sum_{i=1}^{m+1} |u''|^2+\sum_{i=1}^m \frac 1\theta_i |J[u'](x_i)|^2\geq c\|u\|_V^2.
\]
Therefore $\A$ is coercive on $V$, and its range is $V'$, see \cite[Theorem 2.2.1]{temam1997infinite}.
\end{proof}

As was mentioned in Section \ref{section:crack}, functions $u=u(x)$ modeling an arch with cracks are expected to satisfy certain boundary conditions. For convenience, we restate them here:
\begin{equation}\label{var:eq2}
u(0)=u(\pi)=0,\quad u''(0)=u''(\pi)=0,
\end{equation}
and 
\begin{equation}\label{var:eq4}
 J[u](x_i)=0,\quad J[u''](x_i)=0,\quad J[u'''](x_i)=0,\quad J[u'](x_i)=\theta_i u''(x_i^+),
\end{equation}
for $i=1,\dots,m$.

The next theorem is the main result of this paper.
\begin{theorem}\label{var:thm2}
Let the domain of $\A$ be $D(\A)=\{ v\in V : \A v\in H\}$.
\begin{enumerate}[(i)]
\item If $u\in D(\A)$, then $u|_{l_i}\in H^4(l_i)$, $\A u=u''''$ a.e. on $l_i$,  $i=1,\dots,m+1$, and $u$ satisfies  conditions \eqref{var:eq2}--\eqref{var:eq4}. 
\item Let $f\in H$, then equation $\A u=f$ in $V'$ has a unique solution $u\in D(\A)$.
\end{enumerate}
\end{theorem}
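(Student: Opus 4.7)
The plan is as follows. Part (ii) is immediate from Lemma \ref{var:lemma1}: since $\A : V \to V'$ is a continuous coercive isomorphism, every $f \in H \subset V'$ admits a unique preimage $u \in V$, and $u \in D(\A)$ because $\A u = f \in H$ by construction. The real content of the theorem lies in (i), which I would split into two stages.

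The first stage is interior regularity on each $l_i$. Fix $u \in D(\A)$ and let $f = \A u \in H$. For any $\phi \in C_c^\infty(l_i)$, extended by zero to $[0,\pi]$, one checks that $\phi \in V$ with $J[\phi'](x_j) = 0$ for every $j$, so the variational identity $\langle \A u, \phi\rangle = (f,\phi)_H$ collapses to $(u'', \phi'')_i = (f,\phi)_i$. This identifies $f|_{l_i}$ with the distributional fourth derivative of $u|_{l_i}$, giving $u|_{l_i} \in H^4(l_i)$ and $u'''' = \A u$ a.e.\ on $l_i$. Sobolev embedding then makes the one-sided values $u''(x_i^\pm)$, $u'''(x_i^\pm)$ and the endpoint values at $0,\pi$ classically meaningful.

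The second stage is to read off the relations \eqref{var:eq2}--\eqref{var:eq4}. Integrating by parts twice on each $l_i$, using $u'''' = f$, and cancelling $(f,v)_H$ on both sides of \eqref{var:eq8} turns the variational equation into
\[
\sum_{i=1}^{m+1}\bigl[u'' v' - u''' v\bigr]_{x_{i-1}^+}^{x_i^-} + \sum_{i=1}^m \frac{1}{\theta_i} J[u'](x_i) J[v'](x_i) = 0, \qquad v \in V.
\]
The individual conditions are then peeled off by inserting families of test functions that leave exactly one boundary or junction term alive. Using $C^2$ cutoff functions (for which the spring sum automatically vanishes) with prescribed nodal values of $v$ and $v'$ at $\{0,x_1,\dots,x_m,\pi\}$ yields, in turn, $u''(0)=u''(\pi)=0$, $J[u''](x_k)=0$, and $J[u'''](x_k)=0$ for every $k$.

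The principal obstacle is the rotational-spring condition $J[u'](x_k) = \theta_k u''(x_k^+)$. Here one needs a genuine element of $V \setminus H^2(0,\pi)$: a test function $v$ supported near $x_k$, continuous on $[0,\pi]$, with a prescribed nonzero jump $J[v'](x_k)$, and with $v$ and $v'$ vanishing at every other junction and endpoint. A concrete construction is a piecewise cubic Hermite function on $(x_{k-1},x_k) \cup (x_k,x_{k+1})$, matched appropriately at the nodes and extended by zero. With the already-derived $J[u''](x_k) = 0$, the identity collapses to
\[
\bigl(\theta_k^{-1} J[u'](x_k) - u''(x_k^+)\bigr)\, J[v'](x_k) = 0,
\]
and the arbitrariness of $J[v'](x_k)$ delivers \eqref{var:eq4}. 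That $V$ was built wide enough to host such a $v$ is precisely what makes $\A$ absorb the crack condition.
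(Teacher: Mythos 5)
Your proposal is correct and follows essentially the same route as the paper: part (ii) from the coercivity/surjectivity of $\A$ in Lemma \ref{var:lemma1}, interior $H^4$ regularity via $C_c^\infty(l_i)$ test functions, then integration by parts on each subinterval and a sequence of test functions isolating one boundary or junction term at a time, ending with a test function having a prescribed jump $J[v'](x_k)$ to extract the spring condition (the paper uses a piecewise linear hat-type function where you use a piecewise cubic Hermite, an immaterial difference).
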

\begin{proof}
By Lemma \ref{var:lemma1}, the operator $\A$ is coercive, and its range is $V'$. Since $H=H'\subset V'$, condition $f\in H$ implies that $f\in V'$. Therefore equation $\A u=f$ in $V'$ has a solution $u\in D(\A)$, which is unique since $\A$ is coercive. 

To investigate the properties of functions in $D(\A)$, recall that $l_1=(x_0, x_1)$. Notice that $C_0^\infty(l_1)\subset V$, where it is assumed that the  functions from  $C_0^\infty(l_1)$ are extended by zero outside of $l_1$. Thus $v(x)=v'(x)=0$, for $x=0$ and any $x\geq x_1$, $v\in C_0^\infty(l_1)$.

Let $u\in D(\A)$, so $\A u=f$ for some $f\in H$. By the definition of $V$, we have $u|_{l_1}\in H^2(l_1)$.
For any $v\in C_0^\infty(l_1)$, by the definition of $\A$, we have
\[
\langle \A u,v\rangle=\int_{l_1} u''(x)v''(x)\, dx.
\]
Integration by parts gives
\[
\int_{l_1} u''(x)v''(x)\, dx=\int_{l_1} u(x)v''''(x)\, dx=(f,v)_H=\int_{l_1} f(x) v(x)\, dx.
\]
Therefore $D^{(4)}u=f$ in the sense of the weak derivatives on $l_1$.
Thus $u|_{l_1}\in H^4(l_1)$, and $u''''=f$ a.e. on $l_1$.
Repeating this argument for other intervals $l_i$, we conclude that 
$u|_{l_i}\in H^4(l_i)$, and $u''''=f$ a.e. on $l_i$, $i=1,\dots,m+1$. 

It remains to show the satisfaction of the conditions \eqref{var:eq2}--\eqref{var:eq4}.
So, let $\A u=f\in H$. Since we have already established that $u|_{l_i}\in H^4(l_i)$, $i=1,\dots,m+1$, we can do the Integration by Parts on every interval $l_i$, to obtain that for \emph{any}  $v\in V$
\begin{align*}
(f,v)_H&=\langle\A u,v\rangle=\sum_{i=1}^{m+1}( u'', v'')_i+\sum_{i=1}^m \frac 1\theta_i J[u'](x_i) J[v'](x_i)\\
&=\sum_{i=1}^{m+1}( u'''', v)_i - u''' v|_0^\pi+\sum_{i=1}^m u''' v|_{x_i^-}^{x_i^+}+u''v'|_0^\pi-\sum_{i=1}^m u'' v'|_{x_i^-}^{x_i^+}\nonumber\\
& +\sum_{i=1}^m \frac 1\theta_i J[u'](x_i) J[v'](x_i).\nonumber 
\end{align*}
 Since $v\in V$, we have $v(0)=v(\pi)=0$, and $v$ is continuous on $[0,\pi]$. Therefore the above equality can be rewritten as
\begin{align}\label{var:eq62}
&\sum_{i=1}^{m+1}( u''''-f, v)_i +\sum_{i=1}^m J[u'''](x_i) v(x_i)+u''v'|_0^\pi-\sum_{i=1}^m u'' v'|_{x_i^-}^{x_i^+}\\
& +\sum_{i=1}^m \frac 1\theta_i J[u'](x_i) J[v'](x_i)=0.\nonumber 
\end{align}
The first sum is zero, since $u''''=f$ a.e. on $l_i$, $i=1,\dots,m+1$.
 Next, choose a continuously differentiable $v\in V$, which is not zero only in a small neighborhood of $x=0$, and $v'(0)\not=0$. Conclude that $u''(0)=0$. Similarly, $u''(\pi)=0$. 
 
 Choose a continuously differentiable $v\in V$, such that $v'(x_i)=0$, and $v(x_i)=0$ for all $i=2,\dots,m$, but $v(x_1)\not=0$, $v'(x_1)=0$. Conclude that $J[u'''](x_1)=0$. Repeat this procedure for other points $x_i$, one at a time. Thus $J[u'''](x_i)=0$, $i=1,\dots,m$.  We are left with
\begin{equation}\label{var:eq64}
\sum_{i=1}^m\left[\frac 1\theta_i J[u'](x_i) J[v'](x_i)- u'' v'|_{x_i^-}^{x_i^+} \right]=0.
\end{equation}
Choose a continuously differentiable $v\in V$, which is not zero only in a small neighborhood of $x_1$, and such that $v'(x_1)\not=0$. This implies $J[u''](x_1)v'(x_1)=0$. Therefore $J[u''](x_1)=0$. Repeat for other points $x_i$. Thus $u''(x_i^+)=u''(x_i^-)$ for $i=1,\dots,m$. Now we can rewrite \eqref{var:eq64} as
\[
\sum_{i=1}^m\left[\frac 1\theta_i J[u'](x_i)-u''(x_i^+) \right] J[v'](x_i)=0.
\]
Choose a continuous, piecewise linear $v\in V$, such that  $v(x_1)=1$,  $v$ is linear on $[0,x_1]$, and on $[x_1,\pi]$. Note that $J[v'](x_i)=0$ for $i=2,\dots,m$, and $J[v'](x_1)\not=0$. Conclude that $J[u'](x_1)=\theta_1 u''(x_1^+)$. Repeat for other points $x_i$, $i=2,\dots,m$. Thus $u$  satisfies all the conditions \eqref{var:eq2}--\eqref{var:eq4}.
\end{proof}

{\bf Remark}. The fact that $u''''=f$ a.e. on $(0,\pi)$ in Theorem \ref{var:thm2} does not imply that $u\in H^4(0,\pi)$. This is similar to the fact that the strong derivative $p'$ of a step function $p$ on $(0,\pi)$ is zero a.e. on $(0,\pi)$. However,  $p\not\in H^1(0,\pi)$. 

Finally in this section we discuss the {\bf eigenvalues} and the {\bf eigenfunctions} of the operator $\A$.
 It was shown in Lemma 
\ref{var:lemma1} that $\A$ is a continuous, linear, symmetric, and coercive operator from $V$ onto $V'$. 
Following \cite[Section 2.2.1]{temam1997infinite}, 
$\A$ can also be considered as an unbounded operator in $H$. By Lemma \ref{hilbert:lemma2}, the embedding $V\subset H$ is compact. Therefore the standard spectral theory for Sturm-Liouville boundary value problems is applicable. The eigenfunctions belong to $H$. Therefore, by Theorem \ref{var:thm2}, they are in the domain  $D(\A)\subset V$, thus continuous on $[0,\pi]$, and satisfy  conditions \eqref{var:eq2}--\eqref{var:eq4}.

 We summarize these results in the following lemma.
\begin{lemma}\label{var:lemma20}
Let $\A$ be the operator defined in \eqref{var:eq8}. Then
\begin{enumerate}[(i)]
\item There exists an increasing sequence of its real positive eigenvalues \newline
$\l^4_1,\l^4_2,\dots$, with $\lim_{k\to\infty}\l^4_k=\infty$. 
\item The corresponding eigenfunctions $\varphi_k\in D(\A)\subset V$, $k\geq 1$, and they satisfy the junction conditions \eqref{var:eq2}--\eqref{var:eq4}.
\item The eigenfunctions $\varphi_k$ satisfy $\A\varphi_k=\l^4_k\varphi_k$ in $H$, $k \geq 1$. That is, $\varphi_k''''(x)=\l^4_k\varphi_k(x)$ a.e. on every interval $l_i$, $i=1,\dots, m+1$. 
\item The set $\{\varphi_k\}_{k=1}^\infty$ is a complete orthonormal basis in $H$.

\end{enumerate} 
\end{lemma}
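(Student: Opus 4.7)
The plan is to reduce the statement to the spectral theorem for compact self-adjoint operators applied to an inverse of $\A$. From Lemma \ref{var:lemma1}, the operator $\A : V \to V'$ is a coercive isomorphism, so for every $f \in H \subset V'$ the equation $\A u = f$ has a unique solution $u \in V$; by the definition of $D(\A)$ this $u$ actually lies in $D(\A)$. Denote this solution operator by $T : H \to V$. Since the embedding $V \hookrightarrow H$ is compact by Lemma \ref{hilbert:lemma2}, composing yields $T : H \to H$ compact.

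Next I would verify that $T$ is self-adjoint and positive on $H$. For $f, g \in H$ set $u = Tf$ and $v = Tg$. Using the symmetry of $\A$ from Lemma \ref{var:lemma1} together with the Gelfand triple identity $\langle h, w\rangle_V = (h, w)_H$ valid for $h \in H$, $w \in V$ (noted at the end of Section \ref{section:hilbert}),
\begin{equation*}
(Tf, g)_H = (u, g)_H = \langle g, u\rangle_V = \langle \A v, u\rangle = \langle \A u, v\rangle = \langle f, v\rangle_V = (f, Tg)_H.
\end{equation*}
Positivity and injectivity come from coercivity: $(Tf, f)_H = \langle \A u, u\rangle \geq c\|u\|_V^2 \geq 0$, with equality forcing $u = 0$, hence $f = \A u = 0$.

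The Hilbert--Schmidt spectral theorem now furnishes a countable orthonormal basis $\{\varphi_k\}_{k=1}^\infty$ of $H$ consisting of eigenfunctions of $T$, with strictly positive eigenvalues $\mu_k$ accumulating only at $0$. Setting $\lambda_k^4 = 1/\mu_k$ and ordering so that $\mu_k$ decreases yields an increasing sequence of positive eigenvalues of $\A$ with $\lambda_k^4 \to \infty$, proving (i) and (iv). For (ii) and (iii), since $\A \varphi_k = \lambda_k^4 \varphi_k \in H$, each eigenfunction lies in $D(\A)$; invoking Theorem \ref{var:thm2}(i) then delivers $\varphi_k|_{l_i} \in H^4(l_i)$, the pointwise equation $\varphi_k''''(x) = \lambda_k^4 \varphi_k(x)$ a.e.\ on each $l_i$, and all the junction conditions \eqref{var:eq2}--\eqref{var:eq4}.

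The main technical point is being careful with the Gelfand triple identifications when checking self-adjointness of $T$, but this is essentially bookkeeping given the pairing identity already recorded in Section \ref{section:hilbert}. Once $T$ is established as a compact, self-adjoint, positive operator on $H$, the remainder reduces to citing the standard spectral theorem for such operators and then bootstrapping regularity and boundary behavior of the eigenfunctions via Theorem \ref{var:thm2}.
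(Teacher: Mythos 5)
Your proposal is correct and follows essentially the same route as the paper: the paper invokes the standard spectral theory for a symmetric, coercive operator with compact embedding $V\subset H$ (citing Temam) and then applies Theorem \ref{var:thm2} for regularity and the junction conditions, which is exactly the compact self-adjoint inverse argument you spell out in detail. Your version simply makes explicit the construction of the solution operator $T$, its self-adjointness and positivity via the Gelfand triple pairing, and the passage from the eigenvalues $\mu_k$ of $T$ to $\l_k^4=1/\mu_k$.
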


Algorithms for a computational determination of the eigenvalues and the eigenfunctions of $\A$ are discussed in Section \ref{section:eig}.

{\bf Remark.} If the arch is uniform, i.e. it has no cracks, then  the results presented in this section are simplified. Specifically, the spaces $V, H$, and the operator $\A$ take the following forms
\begin{equation}\label{var:eq70}
V=H^2(0,\pi)\cap H_0^1(0,\pi),\quad H=L^2(0,\pi),\quad \langle\A u,v\rangle_V=(u'',v'')_H,
\end{equation}
for any $u,v\in V$.
See \cite{GUTMAN2013297} for an investigation of this case.

\setcounter{equation}{0}
\section{Eigenvalues and eigenfunctions}\label{section:eig}
In this section we present the Modified Shifrin's method for the computation of the eigenfunctions $\varphi_k,\, k\geq 1$ and the corresponding eigenvalues $\l^4_k$, of the operator $\A$.  The existence of the eigenvalues and the eigenfunctions has been established in Lemma \ref{var:lemma20}.

\vskip2mm
{\bf Transition matrices method}. This is a common method for the determination of the eigenfunctions and the eigenvalues, so we just briefly mention it for  completeness, see \cite{LIN2002987} for details.

The general solution of the equation $w''''=\l^4 w$ on $l_i=(x_{i-1}, x_i)$, for $i=1,\dots,m+1$ is 
\begin{align}\label{eig:eq4}
 w_i^\lambda(x) &= A_i \sin \lambda (x-x_{i-1}) + B_i \cos \lambda(x-x_{i-1}) \\ 
  & + C_i \sinh \lambda (x-x_{i-1}) + D_i \cosh \lambda (x-x_{i-1}).\nonumber
\end{align}
Let  vector $\vec{A}_i=[A_i,B_i,C_i,D_i]^T$ be composed of the coefficients of the expansion in \eqref{eig:eq4}, on interval $l_i$. 

Suppose that vector $\vec A_1$ is known. Then it defines function $w_1^\l$ on $l_1$, and the boundary conditions for $w_1^\l$ at $x_1^-$. The junction conditions \eqref{var:eq2}--\eqref{var:eq4} define the boundary conditions for $w_2^\l$ at $x_1^+$. Then the initial value problem $(w_2^\l)''''=\l^4 w_2^\l$ on $l_2=(x_1, x_2)$, uniquely defines the expansion coefficients $\vec A_2$ on $l_2$. It is readily seen that the transformation from $\vec A_1$ to $\vec A_2$ is linear, and it is given by a $4\times 4$ matrix $T^{(1)}$, i.e. $\vec A_2=T^{(1)}\vec A_1$. 

Extending this process to all the subintervals $l_i, i=1,\dots, m+1$, we get
\begin{equation}\label{eig:eq8}
\vec A_{m+1}=T^{(m)}\cdots T^{(1)}\vec A_1.
\end{equation}
Note that all the matrices $T^{(i)}$ are $\l$-dependent in a non-linear way.

To satisfy the hinged boundary conditions at $x=0$, we require $\vec{A}_1=[A_1,0,C_1,0]^T$. 

Let the $2\times 4$ matrix $B^{(m+1)}$ transform the solution $w_{m+1}^\l$ determined by the vector $\vec A_{m+1}$ into the boundary conditions for $w_{m+1}^\l$ and $(w_{m+1}^\l)''$ at $x=\pi$.

To satisfy the hinged boundary conditions at $x=\pi$, we have to solve the matrix equation
\begin{equation}\label{eig:eq10}
[0,0]^T=B^{(m+1)}T^{(m)}\cdots T^{(1)}\vec A_1.
\end{equation}

This matrix equation has a non-trivial solution, if the corresponding $\l$-dependent $2\times 2$ determinant is equal to zero. This amounts to finding an eigenvalue  $\l^4$  of the problem. Numerically, the highly nonlinear equation is solved by a Newton type method. The computations can be quite expensive, so the applicability of this method is usually restricted to a small number of cracks, see \cite{LIN2002987}.

\vskip2mm
{\bf Modified Shifrin's method}. The original method is described in \cite{SHIFRIN1999}. We modify it by placing it within the framework of this paper. Also, notice that in our study we have proved the existence of the eigenvalues and the eigenfunctions for the operator $\A$, which provides the theoretical justification for the method.

Let $V_l\subset V$ be the linear space of continuous piecewise linear functions on $[0,\pi]$, which are linear on every interval $l_i=(x_{i-1}, x_i)$, $i=1,\dots m+1$. Note that $V_l$ is an $m$-dimensional space. 

The goal of the next  result is to show that any function $u\in V$ can be uniquely represented as $u=u_s+u_l$, where  $u_s$ is smooth, and $u_l\in V_l$. Thus $u_l$  absorbs all the jumps of the derivative  $u'$ on $(0,\pi)$.
\begin{lemma}\label{eig:lemma2}
Let $u\in V$. Then there exists a unique decomposition
\begin{equation}\label{eig:lemma2:eq2}
u=u_s+u_l,
\end{equation}
where $u_s\in H_0^1(0,\pi)\cap H^2(0,\pi)$, and $u_l\in V_l$.
\end{lemma}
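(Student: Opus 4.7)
The decomposition amounts to exhibiting the direct sum $V = V_s \oplus V_l$, where $V_s := H^2(0,\pi) \cap H_0^1(0,\pi)$. Note that $V_s \subset V$, since elements of $V_s$ have continuous derivatives on $[0,\pi]$ and hence automatically satisfy $J[u'](x_i)=0$, and also $V_l \subset V$ since elements of $V_l$ are continuous with zero boundary values and piecewise linear (in particular piecewise $H^2$). The plan is to identify $V_l$ with the ``obstruction space'' that carries exactly the jumps of $u'$ that a general element of $V$ may exhibit at the cracks.

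The main tool is the jump map $\pi : V \to \R^m$ defined by
\[
\pi(u) = \bigl(J[u'](x_1),\dots,J[u'](x_m)\bigr),
\]
which is clearly linear. I claim $\ker\pi = V_s$: if $u\in V$ and $\pi(u)=0$, then $u'$ glues continuously across every $x_i$, and a short distributional computation (integration by parts on each $l_i$, with the interior boundary terms telescoping to $\sum_i J[u'](x_i)\phi(x_i)=0$) shows that the distributional derivative of $u'$ on $(0,\pi)$ equals the piecewise second derivative, which lies in $L^2(0,\pi)$; combined with $u(0)=u(\pi)=0$ this places $u$ in $V_s$. Next I would show that $\pi$ restricted to $V_l$ is an isomorphism onto $\R^m$. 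Injectivity: any $v\in V_l$ with $J[v'](x_i)=0$ for all $i$ has continuous derivative and is piecewise linear, hence globally affine on $[0,\pi]$, and $v(0)=v(\pi)=0$ forces $v\equiv 0$. Since $V_l$ is $m$-dimensional (parameterized by the interior nodal values $v(x_1),\dots,v(x_m)$), injectivity implies surjectivity.

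Granted these two facts, existence and uniqueness of the decomposition follow at once. Given $u\in V$, let $u_l\in V_l$ be the unique element with $\pi(u_l)=\pi(u)$ and set $u_s := u - u_l$; then $\pi(u_s)=0$, so $u_s\in V_s$. For uniqueness, two decompositions differ by an element of $V_s\cap V_l$, which is zero by the injectivity argument above. The only mildly delicate step is the distributional identification inside the kernel computation, which is essential to upgrade ``piecewise $H^2$ with continuous first derivative'' to genuine $H^2(0,\pi)$ membership; this is the same phenomenon warned against in the remark following Theorem~\ref{var:thm2}, but here the vanishing jumps make the upgrade legitimate.
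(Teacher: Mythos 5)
Your proof is correct, but it takes a genuinely different route from the paper's. The paper argues constructively: it builds $u_s$ explicitly as $u_s(x)=v(x)-v(\pi)x/\pi$ with $v(x)=\int_0^x\!\int_0^\xi u''(\tau)\,d\tau\,d\xi$, checks that $v''=u''$ and that the boundary values vanish, and then observes that $u_l=u-u_s$ has vanishing second derivative on each $l_i$ and hence lies in $V_l$; uniqueness comes from noting that the difference of two candidates has zero $V$-norm. You instead prove the structural statement $V=V_s\oplus V_l$ by means of the jump map $\pi(u)=\bigl(J[u'](x_1),\dots,J[u'](x_m)\bigr)$: you identify $\ker\pi$ with $V_s$ (the one delicate point, correctly handled, being the upgrade from ``piecewise $H^2$ with continuous first derivative'' to genuine $H^2(0,\pi)$ membership via the distributional integration by parts with telescoping boundary terms), and you show $\pi|_{V_l}$ is a bijection onto $\R^m$ by injectivity plus the dimension count $\dim V_l=m$. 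Your argument buys a cleaner conceptual picture --- the decomposition is exactly the splitting of $V$ along the jump functionals, and it adapts immediately to other boundary conditions or choices of complement --- while the paper's construction buys an explicit closed-form formula for $u_s$ (and hence for $u_l$), which is what is actually used downstream in the Modified Shifrin's method, where $\varphi_l=\sum_i\Delta_i w_i$ is written out in terms of the explicit basis $\{w_i\}$. Both proofs are complete; yours is a valid alternative.
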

\begin{proof}
Let $v$ be defined by
\[
v(x)=\int_0^x\left[\int_0^\xi u''(\tau)\,d\tau  \right] d\xi, \quad x,\xi\in [0,\pi].
\]
Then $v\in H^2(0,\pi)$, $v(0)=v'(0)=0$, and $v''=u''$.
Let $u_s(x)=v(x)-v(\pi)x/\pi$, $x\in[0,\pi]$. Then $u_s(0)=u_s(\pi)=0$. Therefore $u_s\in H_0^1(0,\pi)\cap H^2(0,\pi)\subset V$. Note that $J[u_s'](x_i)=J[v'](x_i)=0$, and $u_s''=u''$.
If $\tilde u_s$ is another such function, then $\|\tilde u_s -u_s\|_V=0$, and $\tilde u_s=u_s$.

Now we just let $u_l=u-u_s$. Then $u_l''=0$ on every interval $l_i$, and $u_l\in V_l$. The function $u_l$ is unique, since $u-u_l$ is smooth, which is already shown to be unique.
\end{proof}

Let $\varphi=\varphi_k$ be an eigenfunction of $\A$. The index $k$ will be suppressed for notational simplicity.

By Lemma \ref{var:lemma20}{\it(iii)},
\begin{equation}\label{eig:eq22}
\varphi''''(x)=\lambda^4\varphi(x)
\end{equation}
where the equality is satisfied a.e. on every interval $l_i$.
By Lemma \ref{eig:lemma2} we can represent $\varphi$ as
\begin{equation}\label{eig:eq23}
\varphi=\varphi_s+\varphi_l,
\end{equation}
where $\varphi_s$ is smooth, and $\varphi_l\in V_l$. Since $\varphi_l''=0$ on every $l_i$, equation \eqref{eig:eq22} becomes
\begin{equation}\label{eig:eq24}
\varphi_s''''=\lambda^4\varphi_s +\lambda^4\varphi_l.
\end{equation}

Let $\{w_i\}_{i=1}^m$ be the basis in $V_l$, defined by 
\begin{equation}\label{eig:eq26}
w_i(x)=\begin{cases}
\frac{x_i-\pi}{\pi}x, & 0\leq x\leq x_i\\
\frac {x_i}{\pi} (x-\pi), & x_i< x\leq \pi.
\end{cases}
\end{equation} 
Note that $w_i(x)<0$, for $0<x<\pi$,  $J[w_i'](x_i)=1$, and $x_i$ is the only discontinuity point of $w_i'$ on $(0,\pi)$.

Let $\Delta_i=J[\varphi'](x_i)$. Note that $\Delta_i=\Delta_i(\l)$.
Since $J[\varphi_l'](x_i)=J[\varphi'](x_i)$, and $\{w_i\}_{i=1}^m$ is a basis in $V_l$, the function $\varphi_l$ can be represented as
\begin{equation}\label{eig:eq28}
\varphi_l(x;\l)=\sum_{i=1}^m \Delta_i(\l) w_i(x).
\end{equation}

Therefore equation \eqref{eig:eq24} can be written as
\begin{equation}\label{eig:eq30}
\varphi_s''''=\lambda^4\varphi_s +\lambda^4\sum_{i=1}^m \Delta_i w_i.
\end{equation}

In \cite{SHIFRIN1999},  equation (6) on p. 412 is used instead of \eqref{eig:eq24}. Note, that while our approaches are similar, the notations may be defined differently. Thus the formulas in \cite{SHIFRIN1999} cannot be used in our framework.

Equation \eqref{eig:eq30} for $\varphi_s$ is  a linear non-homogeneous, fourth order ODE on $(0,\pi)$. Its general solution 
is the sum of the complementary solution
\begin{equation}\label{eig:eq40}
\left(\varphi_s\right)_c(x;\l)=A\cos(\l x)+B\sin(\l x)+C\cosh(\l x)+D\sinh(\l x),
\end{equation}
and a particular solution $\left(\varphi_s \right)_p(x;\l)$. The latter can be found using the Laplace transform. Its expression in a convolution form is
\begin{equation}\label{eig:eq42}
\left(\varphi_s \right)_p(x;\l)=\l^4\frac 1{2\lambda^3}\left(\sinh(\l x)-\sin(\l x)\right) * \sum_{i=1}^m \Delta_i w_i(x).
 \end{equation}
Let
\begin{equation}\label{eig:eq44}
M_i(x;\l)=\int_0^x\left(\sinh(\l (x-s))-\sin(\l (x-s))\right)  w_i(s)\, ds.
\end{equation}
Then the general solution $\varphi_s=(\varphi_s)_c+(\varphi_s)_p$ of \eqref{eig:eq30} can be written as
\begin{equation}\label{eig:eq46}
\varphi_s(x;\l)=  \left(\varphi_s\right)_c(x;\l)+\frac{\l}{2}  \sum_{i=1}^m \Delta_i   M_i(x;\l).
\end{equation}
Note that all the expressions $ M_i(x;\l), \, i=1,\dots,m$ can be computed explicitly.
 
Recall from the junction conditions \eqref{var:eq4}, that $\varphi''(x_j^-)=\varphi''(x_j^+)$, and $\Delta_j=J[\varphi'](x_j)=\theta_j \varphi''(x_j^+)$ for $j=1,\dots,m$.
By construction $\varphi''(x)=\varphi_s''(x)$. Thus $\varphi''_s(x_j^-)=\varphi''_s(x_j^+)$, and we can write $\Delta_j=\theta_j \varphi''_s(x_j)$, $j=1,\dots,m$. Therefore, the last expression becomes
\begin{equation}\label{eig:eq50}
\Delta_j=\theta_j\left[\left(\varphi_s\right)_c''(x_j;\l) +\frac{\l}{2}  \sum_{i=1}^m \Delta_i   M_i''(x_j;\l) \right], 
\end{equation}
where the derivatives are in $x$, and $j=1,\dots,m$.

Thus \eqref{eig:eq50} gives $m$ linear equations for $m+4$ unknowns $\Delta_j,\,j=1,\dots,m$, and $A, B, C, D$. Note that these $m$ equations are valid for any boundary conditions at the ends of the interval $(0,\pi)$. The missing four equations are derived from the boundary conditions of the problem.

For example, for the hinged boundary conditions \eqref{intro:eq32}, we have the following four linear equations:
$\varphi_s(0;\l)=\varphi_s''(0;\l)=0$, and $\varphi_s(\pi;\l)=\varphi_s''(\pi;\l)=0$. More explicitly, using \eqref{eig:eq46}, equation $\varphi_s(0;\l)=0$ becomes
\begin{equation}\label{eig:eq54}
\left(\varphi_s\right)_c(0;\l)+\frac{\l}{2}  \sum_{i=1}^m \Delta_i   M_i(0;\l)=0,\quad \text{or}\quad A+C=0.
\end{equation}
Similarly, $\varphi_s''(0;\l)=0$ becomes
\begin{equation}\label{eig:eq56}
-\l^2 A+\l^2 C+\frac{\l}{2}  \sum_{i=1}^m \Delta_i   M_i''(0;\l)=0.
\end{equation}
Equations $\varphi_s(\pi;\l)=\varphi_s''(\pi;\l)=0$ are 
\begin{equation}\label{eig:eq58}
\left(\varphi_s\right)_c(\pi;\l)+\frac{\l}{2} \sum_{i=1}^m \Delta_i M_i(\pi;\l)=0,
\end{equation}
and
\begin{equation}\label{eig:eq60}
\left(\varphi_s\right)_c''(\pi;\l)+\frac{\l}{2} \sum_{i=1}^m \Delta_i   M_i''(\pi;\l)=0.
\end{equation}

Writing the linear system \eqref{eig:eq50}--\eqref{eig:eq60} in the matrix form $\bf U(\l) \bar x=\bar 0$ shows that it has non-trivial solutions only if 
\begin{equation}\label{eig:eq62}
\det({\bf U(\l)})=0.
\end{equation}
This non-linear equation has infinitely many solutions $\l_k,\, k\geq 1$, corresponding to the eigenvalues $\l_k^4,\, k\geq 1$. 

Let the corresponding non-trivial solution of the linear system be ${\bf \bar{x}}=\{ \Delta_1(\l_k),\dots, \Delta_m(\l_k), A_k, B_k, C_k, D_k\}$. Then we can compute \newline
$\varphi_l(x;\l_k)=\sum_{i=1}^m \Delta_i(\l_k) w_i(x)$, and
$\varphi_s(x, \l_k)$ from \eqref{eig:eq46}. Finally, we get the eigenfunction $\varphi_k(x)=  \varphi_s(x, \l_k)+\varphi_l(x;\l_k)$. One may want to normalize $\varphi_k$ in $H$ to achieve its uniqueness, up to a sign.

A comparison of the computational efficiency of the methods was conducted in \cite{SHIFRIN1999}. The first three eigenvalues have been computed by the Transition matrices method, and by the Shifrin's method. The Shifrin's method is about twice as fast for the beam with one crack, and about three times as fast for the beam with two cracks.

{\bf Natural beam frequencies}.
For practical applications it is important to express equation \eqref{eig:eq22} in physical variables.

Equation of harmonic transverse oscillations $v=v(x)$ of a uniform beam defined on interval $(0,L)$ is
\begin{equation}\label{phys:eq100}
EIv''''(x)=\omega^2\rho A v(x),\quad 0<x<L,
\end{equation}
where $E$ is the Young's modulus, $A$ and $I$ are the cross-sectional area and the area moment of inertia correspondingly, and $\omega$ is the natural frequency of the oscillations.
For a cracked beam, equation \eqref{phys:eq100} is satisfied on every subinterval $(x_{i-1}, x_i)$, $i=1,\dots, m+1$. 

To relate this equation to the non-dimensional variables, define the $t$-scale $\omega_0$, and the radius of gyration $r$ by
\begin{equation}\label{phys:eq52}
\omega_0=\left(\frac \pi L \right)^2\sqrt{\frac{EI}{\rho A}},\quad r=\sqrt{\frac{I}{A}}. 
\end{equation}
Then make the change of variables
\begin{equation}\label{phys:eq56}
x\leftarrow\frac{\pi x}{L}, \quad v\leftarrow\frac{v}{r},\quad 
t\leftarrow \omega_0 t.
\end{equation}

Then equation \eqref{phys:eq100} becomes
\[
EI r\left(\frac{\pi}{L}\right)^4 v_n''''(x_n, t_n)=\omega^2\rho A r v_n(x_n, t_n).
\]
Thus, in non-dimensional ratios
\[
v''''=\omega^2\left(\frac{L}{\pi}\right)^4\frac{\rho A}{EI}v.
\]
Comparing this equation with the definition of the eigenvalues and the eigenfunctions $\varphi_k''''=\l_k^4\varphi_k$, we conclude that the natural beam frequencies are given by
\begin{equation}\label{phys:eq110}
\omega_k=\l_k^2\left(\frac{\pi}{L}\right)^2\sqrt{\frac{EI}{\rho A}}, \quad k\geq 1.
\end{equation}

{\bf Expressions for flexibilities $\theta_i$}.
The standard approach to modeling a crack is to represent it as a massless rotational spring with the spring constant $k$, and the flexibility $\theta$. 

The spring constant $k$ relates the torque to the angle of rotation. In our case this relationship takes the form $EI y''(x)=k J[v'](x)$, or $J[v'](x)=\theta v''(x)$, where  
\begin{equation}\label{phys:eq22}
\theta=\frac{EI}{k}.
\end{equation}

If the beam has a rectangular cross-section, as shown in Figure \ref{figCracks}, then the area moment of inertia $I$ of the rectangle can be computed explicitly, and \eqref{phys:eq22} can be simplified further. If the crack is double-sided, then by \cite[Eq. (2.8)-(2.10)]{OSTACHOWICZ1991191}, the expression for the flexibility $\theta$ becomes
\begin{equation}\label{phys:eq24}
\theta=6\pi H\hat\mu^2 (0.535-0.929\hat\mu+3.500\hat\mu^2-3.181\hat\mu^3+5.793\hat\mu^4),
\end{equation}
where $H$ is the half-height of the beam cross-section, and $\hat\mu=a/H$.

If the crack is single-sided, then by \cite[Eq. (2.8)-(2.10)]{OSTACHOWICZ1991191}
\begin{equation}\label{phys:eq26}
\theta=6\pi H\hat\mu^2 (0.6384-1.035\hat\mu+3.7201\hat\mu^2-5.1773\hat\mu^3+7.553\hat\mu^4-7.332\hat\mu^5),
\end{equation}
where $H$ is the entire height of the beam cross-section, and $\hat\mu=a/H$.


%

\end{document}